\numberwithin{equation}{section}
\renewcommand{\epsilon}{\varepsilon}
\DeclareSymbolFont{SY}{U}{psy}{m}{n}
\DeclareMathSymbol{\emptyset}{\mathord}{SY}{'306}
\newcommand{\dmax}{d_{\mathrm{max}}}
\newcommand{\E}{\mathbb{E}}
\newtheorem{theorem}{Theorem}[section]{\bf}{\it}
{\bf}{\it}
\newtheorem{proposition}[theorem]{Proposition}{\bf}{\it}
{\bf}{\it}
{\it}{\rm}
\newtheorem{lemma}[theorem]{Lemma}{\bf}{\it}
{\bf}{\it}
{\bf}{\it}
{\bf}{\it}
\theoremstyle{definition}
\newtheorem{remark}[theorem]{Remark}
\newcommand{\black}{\raisebox{.3ex}{\tiny$\bullet$}}
\title[Almost sure convergence of vertex degree densities in the vertex--splitting model]{Almost sure convergence of vertex degree densities in the vertex--splitting model}
\author[S.\"{O}.~Stef\'{a}nsson]{Sigurdur \"{O}. Stef\'{a}nsson}
 \address{S.\"{O}.~Stef\'{a}nsson, Division of Mathematics, The Science Institute, University of Iceland,
Dunhaga 3 IS-107 Reykjavik, Iceland}
  \email{sigurdur@hi.is}
\author[E.~Th\"ornblad]{Erik Th\"ornblad}		
 \address{E.~Th\"ornblad, Department of Mathematics, University of Uppsala, Box 480, S-75106 Uppsala, Sweden}
 \email{erik.thornblad@math.uu.se}
\keywords{Vertex splitting, almost sure convergence, degree densities, random trees.}
\date{\today}
\begin{document}

\begin{abstract}
 We study the limiting degree distribution of the vertex splitting model introduced in \cite{DDJS:2009}. This is a model of randomly growing ordered trees, where in each time step the tree is separated into two components by splitting a vertex into two, and then inserting an edge between the two new vertices. Under some assumptions on the parameters, related to the growth of the maximal degree of the tree, we prove that the vertex degree densities converge almost surely to constants which satisfy a system of equations. Using this we are also able to strengthen and prove some previously non--rigorous results mentioned in the literature.
\end{abstract}
\maketitle

\section {Introduction}
The vertex splitting model is a recent model of randomly growing ordered trees introduced in \cite{DDJS:2009}. It is a modification of a model of randomly growing trees encountered in the theory of RNA-folding \cite{RNA}. The parameters of the model are non-negative weights $(w_{i,j})_{i,j\geq 0}$, symmetric in the indices $i$ and $j$,  and the trees are grown randomly in discrete time steps according to the following rules: Let $T$ be an ordered tree, $V_T$ its set of vertices and denote the degree of a vertex $v$ by $\deg(v)$.
\begin {enumerate}
 \item Start with some finite tree $T_0$ at time $t_0 :=|V_{T_0}|$.
 \item Given a tree $T$ at time $t \geq t_0$, select a vertex $v$ in $T$ with probability
 \begin {align}
  \frac{w_{\deg(v)}}{\sum_{v'\in V_T} w_{\deg(v')}}
 \end {align}
 where
 \begin{align} \label{splittingweights}
 w_i:=\frac{i}{2}\sum_{j=1}^{i+1}w_{j,i+2-j}.
\end{align}
\item Partition the edges which contain $v$ into disjoint sets of adjacent edges: $E'$ of size $k-1$ and $E''$ of size $\deg(v)-k+1$, with probability $\frac{w_{k,\deg(v)+2-k}}{w_{\deg(v)}}$. 
\item Remove the vertex $v$ and the edges containing it and insert two new vertices $v'$ and $v''$. Connect $v'$ by an edge to all vertices $u$ such that $uv$ is an edge in $E'$ and  connect $v''$ to all vertices $w$ such that $wv$ is an edge in $E''$. Add the edge $v'v''$ (see Fig~\ref{f:split}).
\end {enumerate}

\begin{figure}[h!]
 \includegraphics[width=0.6\textwidth]{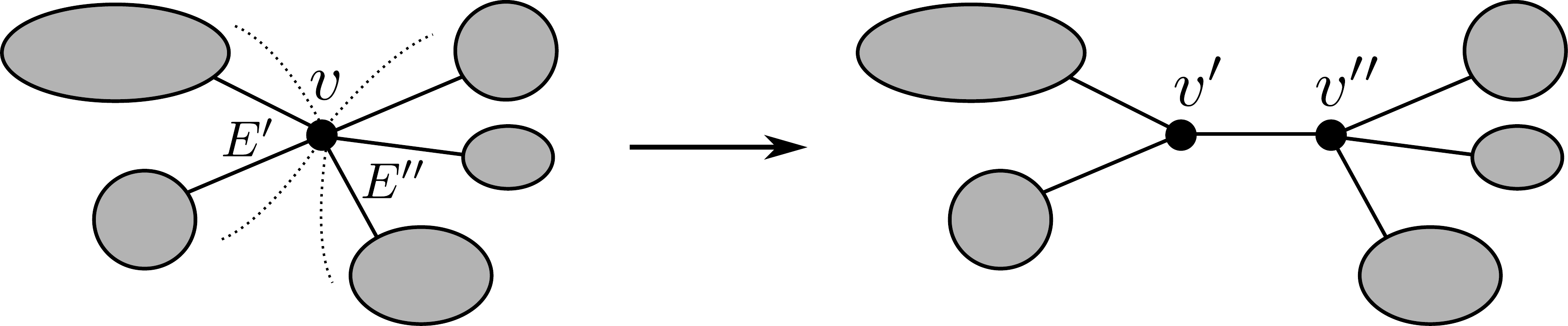}
 \caption{The vertex $v$ of degree $i := \deg(v)$ is split into two new vertices $v'$ and $v''$ of degrees $k := \deg(v')$ and $\ell := \deg(v'') = i+2-k$ respectively with probability $w_{k,\ell}/w_i$. Here $i=5$, $k=3$ and $\ell = 4$.} \label{f:split}
\end{figure}

\begin {remark}
 The numbers $(w_i)_{i\geq 1}$ defined by  \eqref{splittingweights} are called \emph{splitting weights} and the numbers $(w_{i,j})_{i,j\geq 1}$ are referred to as \emph{partitioning weights}. We say that the vertex $v$ which is selected in step (2) is \emph{split} into the vertices $v'$ and $v''$ in step (4). 
 
 In step (3) of the growth rules, the adjacency of edges is well-defined since the trees are ordered.  There are in general many choices of the sets $E'$ and $E''$. When $\deg(v)$ is even and $k-1 = \deg(v)/2$ there are exactly $\deg(v)/2$ different choices but otherwise there are $\deg(v)$ different choices.
\end {remark}
We are interested in studying the distribution of vertex degrees in large random trees $T$ grown according to the  above rules. More precisely, let $n_{t,k}$ be the number of vertices of degree $k$ in the tree at time $t$. In \cite{DDJS:2009} the asymptotics of the expected values $\E(n_{t,k})$ were studied under the following assumptions.
\begin {enumerate}
 \item [(A1)] The splitting weights are linear, i.e.~satisfy $w_i = ai+b$ for some real numbers $a$ and $b$ such that $w_i\geq 0$ for all $i\geq 1$.
 \item [(A2)] There is a finite integer $\dmax$ such that $w_{j,k} = 0$ if either $j$ or $k$ exceeds $\dmax$ (no vertices of degree greater than $\dmax$ are created in the growth process) and $w_{1,k} = w_{k,1}>0$ for all $2 \leq k\leq \dmax$ (it is possible to create vertices of degree $\dmax$ starting from any initial tree). (Corresponds to (1) in Lemma 2.3 in \cite{DDJS:2009}.)
 \item [(A3)] $w_{i,\dmax+2-i}>0$ for some $i$ satisfying $2\leq i \leq \dmax-1$ (it is possible to split  vertices of degree $\dmax$). (Corresponds to (2) in Lemma 2.3 in \cite{DDJS:2009}.)
 \item [(A4)] The $\dmax \times \dmax$ matrix $B$ given by the matrix elements
 \begin {align}
B_{ij} = j w_{i,j+2-i}-\delta_{ij} w_i, \quad 1\leq i,j\leq \dmax
 \end {align}
is diagonalizable. (Appears in Theorem 2.5 in \cite{DDJS:2009}.)
\end {enumerate}
It was shown that under these assumptions the limits $\rho_k := \lim_{t\rightarrow\infty} \E(n_{t,k})/t$ exist for $1\leq k \leq \dmax$ and are the unique positive solutions to 
\begin {align}\label{eq:solution}
 \rho_k = -\frac{w_k}{w_2}\rho_k  + \sum_{i=k-1}^{\dmax} i\frac{w_{k,i+2-k}}{w_2}\rho_i
\end {align} such that $\sum_{k=1}^{\infty}\rho_k = 1$ and $\sum_{k=1}^{\infty}k\rho_k = 2$. The last two should be compared to the equations
\begin{align}\label{eq:sum}
 \sum_{i=1}^{\dmax}n_{t,i}=t \quad \text{ and } \quad \sum_{i=1}^{\dmax}in_{t,i}=2t-2,
\end{align}
which is (2.7) in \cite{DDJS:2009}. 

The condition (A1) is a very convenient technical condition and we will assume that it holds throughout the paper. The reason is that for linear splitting weights, the total weight of selecting a vertex in $T$ only depends on the number of vertices in $T$, namely
\begin{align} \label{eq:wt}
\sum_{v \in V_T} w_{\deg(v)} = \sum_{i=1}^{\dmax} w_i n_{t,i} = (2a+b)t - 2a = w_2 t - 2a =: W_t
\end{align}
by \eqref{eq:sum}.

In this paper we prove stronger results concerning convergence of the random variables $n_{t,k}$. First of all, we prove almost sure convergence of $n_{t,k}/t$ towards $\rho_k$ satisfying \eqref{eq:solution} which immediately implies, by the boundedness of $n_{t,k}/t$  and the dominated convergence theorem, that the expected value converges. Furthermore, we relax some of the conditions (A2)--(A4) stated above as will be mentioned in the statement of the results in Theorem \ref{thm:as}. In particular we do not require the matrix $B$ to be diagonalizable when $\dmax < \infty$ and we obtain partial results when there is no bound on the maximum degree.	  

Throughout this paper we do not take into account the structural properties of the trees, but only analyze the asymptotic vertex degree densities. This means that the analysis fits into the framework of generalized P\'{o}lya urn models. Consider $\dmax$ urns, labelled $1,2,\dots, \dmax$, initially containing some number of balls. At each time step, draw a ball from urn $i$ with probability proportional to $w_in_i$, where $n_i$ is the number of balls in the $i$:th urn. Then put two balls back in, one in the $k$:th urn and one in the $(i+2-k)$:th urn, where the pair $(k,i+2-k)$ is chosen with probability $w_{k,i+2-k}/w_i$. In this framework each vertex of degree $i$ in the vertex splitting tree corresponds to a ball in the $i$:th urn. The body of literature regarding P\'{o}lya urns is large. In particular, whenever $\dmax<\infty$, our results follow from \cite{Janson2004}. In fact stronger results regarding asymptotic joint normality are attainable in this regime, but we do not pursue this matter further. Even though the results of \cite{Janson2004} holds only for a finite number of urns, sometimes a reduction from a situation from infinitely many urns to a situation with finitely many urns is possible. For the vertex splitting model, such a reduction unfortunately only works for the subclass of splitting trees for which, for all large enough $i$, the only positive partitioning weights are $w_{1,i+1},w_{2,i}$.

By choosing the splitting and partitioning weights appropriately, the vertex splitting model contains several other known models. For instance, by letting $w_{1,i+1}$ be the only positive partitioning weight, we may retrieve the random recursive trees by choosing $w_i=1$ and the random plane recursive trees by choosing $w_i=i$. These were analyzed in \cite{Janson2005b} by using the aforementioned connection to generalized P\'{o}lya urns.

Without giving complete details, we also extend our results to the setting considered in the motivating paper \cite{RNA}, in which a correspondence between coloured splitting trees and arch deposition models is exploited to analyze the secondary structure of RNA folding. In this case each vertex is coloured either black or white. If a black vertex is chosen, it is recoloured white. If a white vertex is chosen, it splits (similar to the aforementioned 1--coloured case) into two black vertices. By and large the same methods apply, and moreover it turns out that there is a clear correspondence between the 1--coloured version and the 2--coloured version.

\subsection{Main results}
In the following we let $\dmax$ be a positive integer or infinite. If $\dmax < \infty$ we will assume that initially no vertex has degree $> \dmax$ and that Condition (A2) above is satisfied. Let $s:=\inf\{iw_{1,{i+1}} \ : \  1 \leq i < \dmax\}$. For each $k$ such that $1 \leq k < \dmax +1$, define the sequence $(a_k^{(j)})_{j\geq 0}$ as follows. For $k=1$ let
\begin{align}
 a_1^{(0)}&=0, \label{eq:seq10} \\
 a_1^{(j+1)}&=\frac{1}{w_2+s}\left(s+\sum_{i = 2}^{\infty}(iw_{1,i+1}-s)a_i^{(j)} \right), \label{eq:seq1j}
\end{align}
and when $2 \leq k < \dmax + 1$ let
\begin{align}
 a_k^{(0)}&=0, \label{eq:seqk0} \\
 a_k^{(j+1)}&=\frac{1}{w_2+w_k}\sum_{i = k-1}^{\infty}iw_{k,i-k+2}a_{i}^{(j)}.\label{eq:seqkj} 
\end{align} 
The main result of the paper is the following.

\begin{theorem}\label{thm:as}
Suppose that $\inf\{iw_{1,{i+1}} \ : \  1 \leq i < \dmax\}>0$. Then for each $k$ such that $1 \leq k < \dmax +1$ the following limits exist and it holds almost surely that
\begin{align}
 \lim_{t\to \infty}\frac{n_{t,k}}{t} = \lim_{j\to \infty} a_k^{(j)}=: a_k
\end{align}
and $(a_k)_{1 \leq k <  \dmax +1}$ is a positive solution to 
\begin{align}
 a_k&= - \frac{w_k}{w_2} a_k + \sum_{i= k-1}^{\infty}i \frac{w_{k,i+2-k}}{w_2}a_i \qquad \qquad \qquad \qquad (k\geq 1). \label{eq:ask}
\end{align}
satisfying
\begin {align} \label{eq:sums}
 \sum_{k= 1}^{\infty} a_k = 1 \quad \text{and} \quad \sum_{k= 1}^{\infty} k a_k = 2.
\end {align}
When $\dmax < \infty$ the solution is unique.
\end{theorem}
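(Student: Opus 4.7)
I would combine a martingale concentration argument with an inductive lower bound driven by the iteration $a_k^{(j)}$, closing the argument via the moment identities \eqref{eq:sum}. The starting point is the Doob decomposition
\begin{align}
n_{t,k} = n_{t_0, k} + \sum_{r=t_0}^{t-1} \mu_{r, k} + M_{t, k},
\end{align}
with $\mu_{r, k} := \E[n_{r+1, k} - n_{r, k} \mid \mathcal{F}_r] = W_r^{-1}\left(-w_k n_{r, k} + \sum_{i \geq k-1} i\, w_{k, i+2-k}\, n_{r, i}\right)$ by the selection rule and \eqref{eq:wt}, and $M_{\cdot, k}$ a martingale with increments bounded by $4$. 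Since $\sum_r r^{-2} < \infty$, the strong law for $L^2$--martingales with bounded increments yields $M_{t, k}/t \to 0$ almost surely, simultaneously in $k$ on a single full-probability event.

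The core of the argument is an induction on $j$ showing, on that event, that $\liminf_{t} n_{t, k}/t \geq a_k^{(j)}$ for every admissible $k$. The base case is trivial. For the inductive step, given $\epsilon > 0$, I would pass to a time $t_1$ beyond which $n_{t, i}/t \geq a_i^{(j)} - \epsilon$ for all $i$, truncate the sum defining $\mu_{r, k}$ at some level $N$, substitute the inductive bound, and convert the resulting drift inequality into a bound on $\liminf n_{t, k}/t$ via the Doob decomposition and $M_{t, k}/t \to 0$; sending $N \to \infty$ by monotone convergence (the summands being non-negative) and then $\epsilon \to 0$ delivers $\liminf n_{t, k}/t \geq a_k^{(j+1)}$ directly from \eqref{eq:seqkj} when $k \geq 2$. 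For $k = 1$, first use $\sum_i n_{r, i} = r$ to rewrite
\begin{align}
\sum_{i \geq 2} i\, w_{1, i+1}\, n_{r, i} = s(r - n_{r, 1}) + \sum_{i \geq 2}\left(i\, w_{1, i+1} - s\right) n_{r, i},
\end{align}
and then repeat the argument; matching \eqref{eq:seq1j} requires the coefficients $i\, w_{1, i+1} - s$ to be non-negative, which is exactly the theorem's hypothesis $s > 0$.

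Convergence of the iteration is then automatic: non-negativity of all coefficients in \eqref{eq:seq1j}--\eqref{eq:seqkj} makes $j \mapsto a_k^{(j)}$ non-decreasing, and the lower bound just proved, combined with $n_{t, k}/t \leq 1$, supplies an upper bound, so $a_k := \lim_j a_k^{(j)}$ exists; monotone convergence lets us pass to the limit in the iteration to obtain \eqref{eq:ask}, and a standard sum manipulation exploiting the linearity (A1) gives \eqref{eq:sums}. The matching upper bound on $\limsup n_{t, k}/t$ now drops out from tightness: \eqref{eq:sum} gives $\sum_{k > K} n_{t, k}/t \leq 2/K$ uniformly in $t$, so if $\limsup n_{t, k_0}/t > a_{k_0}$ along some subsequence, then Fatou combined with the $\liminf$ bound on the remaining coordinates contradicts $\sum_k n_{t, k}/t = 1 = \sum_k a_k$. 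Thus $n_{t, k}/t \to a_k$ almost surely; when $\dmax < \infty$, the fixed-point system \eqref{eq:ask}--\eqref{eq:sums} reduces to a finite linear system, from which uniqueness is a routine check.

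The main obstacle I anticipate is the inductive step when $\dmax = \infty$: translating a lower bound on the drift $\mu_{r, k}$ into a pointwise lower bound on $n_{t, k}/t$ via the Doob decomposition requires interchanging limits with an infinite sum, which goes through only because of the non-negativity of the summands after truncation. The hypothesis $s > 0$ is precisely what delivers this non-negativity in the $k = 1$ iteration, into which the normalization constraint $\sum_i n_{r, i} = r$ has been absorbed; without it, neither \eqref{eq:seq1j} nor the inductive bound would have a sign.
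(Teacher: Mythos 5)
Your outline matches the paper's strategy at the level of architecture: establish $\liminf_{t}n_{t,k}/t\geq a_k^{(j)}$ inductively in $j$, use monotonicity of the iteration to extract $a_k$, and then close via Fatou's lemma using $\sum_k a_k=1$ to convert the liminf bound into a matching limsup bound. The differences are in execution and in one crucial step that you treat as routine but is not.

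On the drift-to-liminf conversion: you propose a raw Doob decomposition plus a martingale strong law, whereas the paper invokes a lemma of Backhausz and M\'ori (Lemma~\ref{lem:backhauszmori}) tailored to exactly this setup. Your approach is not wrong in spirit, but you underestimate it. The predictable increment $\mu_{r,k}$ contains the self-referential term $-w_k n_{r,k}/W_r$, so substituting the inductive lower bound for $n_{r,i}$ into the sum does not by itself give a linear-in-$t$ lower bound: one needs a Gronwall/discounting argument that iterates $\E[\xi_t\mid\mathcal{F}_{t-1}]\geq(1-u_t/t)\xi_{t-1}+v_t$, controls the products $\prod(1-u_s/s)$, and combines them with the martingale concentration. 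This is precisely what the Backhausz--M\'ori lemma packages. If you flesh out your Doob step you will essentially re-derive that lemma, which is fine, but ``convert the resulting drift inequality into a bound on $\liminf n_{t,k}/t$ via the Doob decomposition'' glosses over the real work.

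The genuine gap is the claim that ``a standard sum manipulation exploiting the linearity (A1) gives \eqref{eq:sums}.'' When $\dmax=\infty$ this is the single hardest point of the proof, and the paper devotes a dedicated lemma (Lemma~\ref{lem:eigeneqs}) to it. Summing the fixed-point equations termwise, and summing $k$ times the $k$-th equation, involves rearranging a doubly-infinite series; doing this up to a cutoff $N$ produces a tail remainder
\begin{align}
x := \lim_{N\to\infty}\left(-N w_N a_N + \sum_{i\geq N}i\sum_{k=1}^N k w_{k,i-k+2}a_i\right),
\end{align}
and the resulting linear system gives $A=1+\tfrac{ax}{s(a+b)}$ and $B=2+\tfrac{(2a+s)x}{s(a+b)}$. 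One must then argue $x=0$: first $x\leq 0$ since $\dmax=\infty$ forces $a\geq 0$ and $A\leq 1$, $B\leq 2$ already hold by Fatou; and if $x<0$ then eventually $Nw_Na_N>-x/2$, which forces $\sum_N w_N a_N$ to diverge, contradicting $\sum w_k a_k\leq aB+bA<\infty$. Without this, you have only $\sum a_k\leq 1$, and then your Fatou step for the limsup does not close --- it yields $\limsup n_{t,k}/t \leq 1-\sum_{j\neq k}a_j$, which is $\geq a_k$ rather than equal to it. So the whole upper-bound half of the theorem hinges on a non-trivial tail argument that your proposal omits.

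Minor points: your tightness remark $\sum_{k>K}n_{t,k}/t\leq 2/K$ is correct but not needed --- Fatou applied to $1-\sum_{j\neq k}n_{t,j}/t$ suffices once $\sum a_k=1$ is known --- and the uniqueness when $\dmax<\infty$ is indeed routine, since the system is finite-dimensional and assumption (A2) gives full rank on the relevant subspace.
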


\begin {remark}\label{rem:rem1}
We note that Theorem \ref{thm:as} requires fewer assumptions than those in (A1)--(A4). However, we have not provided conditions which guarantee that $(a_k)_{k=1}^{\infty}$ is a unique positive solution to \eqref{eq:ask} satisfying \eqref{eq:sums}.Whenever $\dmax<\infty$ however, uniqueness is guaranteed since the condition that $\inf\{iw_{1,{i+1}} \ : 1 \leq i < \dmax\}>0$ is  equivalent to Condition (A2) above which guarantees that  \eqref{eq:solution} has rank at least $\dmax-1$. Equations \eqref{eq:sums} fix the remaining constant. In the case $\dmax=\infty$ we do not know how to prove that the system has a unique solution, but we comment on some special examples in the next section.

 The case $\dmax<\infty$ is implicitly covered by the case $\dmax=\infty$ by assuming that there is some $i$ for which $w_{1,i+1}=0$. This allows for the identification of three regimes as follows.
\begin{enumerate}[I.]
 \item There is some $i\geq 1$ such that $w_{1,i+1}=0$.
 \item There is no $i\geq 1$ such that $w_{1,i+1}=0$, and $\inf\{iw_{1,{i+1}} \ : 1 \leq i < \dmax\}=0$.
 \item It holds that $\inf\{iw_{1,{i+1}} \ : 1 \leq i < \dmax\}>0$.
\end{enumerate}
 Theorem \ref{thm:as} deals with Case I and III, but we do not know how to extend these results to case II. The assumption $\inf\{iw_{1,{i+1}} \ : 1 \leq i < \dmax\}>0$ in some sense ensures that the probability of performing the split $i\mapsto (1,i+1)$ does not become too small as $i$ grows large.
\end{remark}

The rest of the paper is outlined as follows. In Section 2 we give some examples of the case $\dmax = \infty$ to which Theorem \ref{thm:as} is applicable and where uniqueness of the solution to \eqref{eq:ask}--\eqref{eq:sums} is  proved. The proof of Theorem \ref{thm:as} along with some technical results is presented in Section 3. Finally, in Section 4 we sketch similar results for a two--coloured vertex splitting model that was originally considered in \cite{RNA}.

\section{Examples}
Before we turn to the proof of Theorem \ref{thm:as}, we consider some explicit examples that can be analysed using Theorem \ref{thm:as}. Throughout we assume that $w_i=ai+b$ where $ai+b>0$ for all $2\leq i \leq \dmax-1$. If $a\neq 0$, it is easy to see that the growth rules defined in the introduction are not changed if we take $w_i=i+x$ with $x=b/a$, so for notational convenience we will sometimes use this definition instead, unless we are interested in the case of constant splitting weights.

We will focus on the case where there is no bound on the vertex degrees since the case when $\dmax < \infty$ is already covered. In general we cannot show that the system of equations in Theorem \ref{thm:as} has a unique solution. However, in some specific cases this is possible, and we present some of these here. 

\subsection{Preferential attachment} \label{sec:prefatt}
Preferential attachment--type models are obtained by attaching new edges to existing vertices, where the vertex is chosen with probability proportional to the vertex degree. In the vertex--splitting model with $\dmax=\infty$, this is obtained by setting all partitioning weights to zero, except for $w_{i+1,1}=w_{1,i+1}=\frac{w_i}{i}$ for all $i\geq 1$. The conditions of Theorem \ref{thm:as} are satisfied, so the limiting degree distribution $(a_i)_{i=1}^{\infty}$ satisfies the equations
\begin{align}
 (w_1+w_2)a_1&=\sum_{i=1}^{\infty}w_ia_i  \label{eq:prefatt1} \\
(w_k+w_2)a_k&=w_{k-1}a_{k-1}, \qquad k\geq 2.\label{eq:prefattk}
\end{align}
The relations $\sum_{i=1}^{\infty}a_i=1$ and $\sum_{i=1}^{\infty}ia_i=2$ imply that $\sum_{i=1}^{\infty}w_ia_i=w_2$, so $a_1=\frac{w_2}{w_1+w_2}$. Then, iterating (\ref{eq:prefattk}) we find that
\begin{align}
 a_k = \frac{w_{k-1}}{w_k+w_2}a_{k-1} = \cdots = a_1\prod_{i=2}^{k} \frac{w_{i-1}}{w_{i}+w_2} = \frac{w_2}{w_k} \prod_{i=1}^k \frac{w_{i}}{w_{i}+w_2},
\end{align}
for all $k\geq 1$. It is not difficult to analyse the asymptotics of the sequence $(a_k)_{k=1}^{\infty}$. One can show that when $a \neq 0$
\begin{align} \label{eq:pa0}
 a_k = \frac{(2+x)\Gamma\left(2x+3\right)\Gamma\left(k+x+1\right)}{(k+x)\Gamma\left(x+1\right)\Gamma\left(k+2x+3\right)} \sim  \frac{(2+x)\Gamma\left(2x+3\right)}{\Gamma\left(x+1\right)} k^{-3-x} \quad \text{as } k\to \infty
\end{align}
by using standard asymptotics for the Gamma function.

If $w_i=i$, then the preferential attachment model is equivalent to the model of random plane recursive trees. In this case the exact solution is given by 
\begin{align} \label{eq:pa1}
 a_k = \frac{4}{k(k+1)(k+2)}, \qquad k\geq 1.
\end{align}
This was originally proved by M\'{o}ri \cite{MoriRandomTrees} using martingale methods. He also achieved results on joint normality of the degree densities. The case $w_i=1$ is equivalent to the case of random recursive trees; the exact solution being
\begin {align}\label{eq:pa2}
 a_k = 2^{-k}, \qquad k\geq 1,
\end {align}
in this case. Using the connection to generalized P\'{o}lya urns mentioned in the introduction, Janson \cite{Janson2005b} proved stronger results that imply the almost sure convergence to the densities in \eqref{eq:pa1} and \eqref{eq:pa2}.

\subsection{Uniform partitioning weights} \label{sec:uniform}
 Let $w_i=i+x$, where $x>-1$. In \cite{DDJS:2009}, the expected degree densities were studied in the case of \emph{uniform} partitioning weights, i.e.
\begin{align}
 w_{i,k+2-i} = w_k / \binom{k+1}{2} = \frac{2 w_k}{k(k+1)}.
\end{align}
The methods in \cite{DDJS:2009} were non-rigorous in this case but the results were correct as we confirm here.
By Theorem \ref{thm:as} the asymptotic vertex degree densities $(a_i)_{i=1}^{\infty}$ satisfy
\begin{align}
 (w_2+w_k)a_k=\sum_{i=k-1}^{\infty}\frac{2}{i+1}w_ia_i,\qquad \qquad  k \geq 1.
\end{align}
Subtracting the $k$:th equation from the $(k+1)$:st yields the recursion
\begin{align} \label{eq:unif1}
 (w_2+w_k)a_k-(w_2+w_{k+1})a_{k+1}&=\frac{2}{k}w_{k-1}a_{k-1},  \qquad \qquad  k \geq 1
\end{align}
where we have defined $a_0=0$. The solution is given by
\begin{align}\label{eq:unif2}
 a_k=\frac{1}{C(x)} \frac{2^{k-1} \Gamma(k+x)}{\Gamma(k)\Gamma(k+3+2x)}(k+1+2x),
\end{align}
where
\begin{align}
 C(x)=\frac{e\sqrt{\pi}2^{-\frac{3}{2}-x}I_{\frac{1}{2}+x}(1)}{2+x}
\end{align}
and $I_{\frac{1}{2}+x}$ is the modified Bessel function of the first kind. In particular, for $x=0$, i.e. splitting weights $w_k=k$, we have
\begin{align}
 a_k = \frac{2^{k+2}(k+1)}{(e^2-1)(k+2)!}.
\end{align}
It is easy to see that the obtained solutions are in fact unique. Namely, from \eqref{eq:unif1} one can inductively determine constants $C_1,C_2\dots$ such that $a_k=C_ka_1$ for all $k\geq 1$. The condition $\sum_{i=1}^{\infty}a_i=1$ determines $a_1$ uniquely as a function of these constants.

If the splitting weights are constant, i.e. $w_i=b$ for some $b>0$, then the solution to \eqref{eq:unif1} is given by
\begin{align}
 a_k=\frac{1}{e}\frac{1}{(k-1)!}.
\end{align}
However, in this case $iw_{1,i+1}=O(i^{-1})$, so the conditions of Theorem \ref{thm:as} are not satisfied. This falls into Case II identified in Remark \ref{rem:rem1}, so it does not follow by our results that these are also the almost sure limiting vertex densities.

\subsection{An infinite class}

Our next example provides an infinite class of splitting trees  for which a unique solution to the system of equations is attainable. It includes the preferential attachment model and more generally a model of trees which grow by \emph{attachment and grafting}, studied in \cite{Siggi2012} (see below).

Let $(\alpha_i)_{i=1}^{\infty}$ be a sequence in $(0,1]$, such that $\inf_{i\geq 1} w_i \alpha_i > 0$. Suppose there exists some $M\geq 2$  such that $iw_{1,i+1}=\alpha_{i}w_i$, $iw_{2,i}=(1-\alpha_i)w_i$ for all $i\geq M$ with the exception that $w_{2,2} = (1-\alpha_2)w_2$ when $M=2$. 

The conditions of Theorem \ref{thm:as} are satisfied. In particular, for $k>M$ the asymptotic degree sequence satisfies
\begin{align}\label{eq:infrec}
 (w_k+w_2)a_k&=\alpha_{k-1}w_{k-1}a_{k-1}+(1-\alpha_k)w_ka_{k} 
\end{align}
which is similar to the expressions in Section \ref{sec:prefatt}. Iterating we find 

\begin{align}
a_k=a_M\prod_{i=M+1}^{k}\frac{\alpha_{i-1}w_{i-1}}{w_2+\alpha_iw_i}=:C_ka_M
\end{align}
for all $k>M$. 

For any $1\leq k \leq M$  
\begin{align}
\begin{split}
 (w_k+w_2)a_k = \sum_{i=k-1}^{\infty}iw_{k,i-k+2}a_i 
 = \sum_{i=k-1}^{M}iw_{k,i-k+2}a_i +a_M\sum_{i=M+1}^{\infty}iw_{k,i-k+2}C_i
\end{split}
\end{align}
where the final sum is zero unless $k=1$ or $k=2$. For instance, choosing $k=M$ yields 
\begin{align}
 (w_M+w_2)a_M=(M-1)w_{M,1}a_{M-1}+Mw_{M,2}a_M
\end{align}
so we can determine $C_{M-1}$ so that $a_{M-1}=C_{M-1}a_M$. Continue inductively to find a sequence $(C_i)_{i=1}^{M-1}$ such that $a_k=C_ka_M$ for $1\leq k<M$. In the case $k=1$ and $k=2$ one gets a system of two equations involving $a_1$ and $a_2$ which may easily be seen to have a unique solution which is a multiple of $a_M$. Now, the condition $\sum_{i=1}^{\infty}a_i=1$ means that $a_M\sum_{i=1}^{\infty}C_i=1$, so $a_M=\left(\sum_{i=1}^{\infty}C_i \right)^{-1}$, where we have put $C_M=1$ for consistency. But this allows us to uniquely determine the entire sequence $(a_k)_{k=1}^{\infty}$.

An example of a family of weights which belongs to the above class appears in \cite{Siggi2012} (with a minor modification in the dynamics which does not affect the limiting densities). The weights in \cite{Siggi2012} are defined in terms of two parameters $\alpha,\gamma \in [0,1]$ by 
\begin{align}
w_i = \left(\frac{\alpha}{2}+1-\gamma\right)i + 2\gamma - \alpha -1, \quad\text{for $i\geq 1$}
\end{align}
and by choosing $M=2$ and $\alpha_i = 1-\frac{\alpha i}{2w_i}$ for all $i\geq M$. Note that when $\alpha = 0$ this is the preferential attachment model. In \cite{Siggi2012} the solution to \eqref{eq:ask} when $0<\gamma<1$ was found to be
\begin {align}
a_1 &= \frac{1-\alpha}{1+\gamma-\alpha}\quad  \text{and}\\
a_k &= \frac{\gamma \Gamma\left(\frac{3-\alpha-\gamma}{1-\gamma}\right)\Gamma\left(k-2+\frac{1-\alpha}{1-\gamma}\right)}{(1+\gamma-\alpha)(2-\alpha)\Gamma\left(\frac{1-\alpha}{1-\gamma}\right)\Gamma\left(k-1+\frac{2-\alpha}{1-\gamma}\right)} \quad \text{for $k\geq 2$}
\end {align}
and for $\gamma = 1$
\begin {align}
 a_1 &= \frac{1-\alpha}{2-\alpha} \quad \text{and} \\
 a_k &= \frac{1}{(2-\alpha)^2} \left(\frac{1-\alpha}{2-\alpha}\right)^{k-2} \quad \text{for $k\geq 2$}.
\end {align}
The results agree with \eqref{eq:pa0} and \eqref{eq:pa2} when $\alpha = 0$ (preferential attachment). There was no proof in \cite{Siggi2012} that these solutions are the limiting degree densities but our Theorem \ref{thm:as} along with the uniqueness of the solution confirms that they are the almost sure limit. In the case $0<\gamma<1$, standard asymptotics of the Gamma function yield the power law
\begin {align}
 a_k \sim \frac{\gamma \Gamma\left(\frac{3-\alpha-\gamma}{1-\gamma}\right)}{(1+\gamma-\alpha)(2-\alpha)\Gamma\left(\frac{1-\alpha}{1-\gamma}\right)} k^{-\frac{2-\gamma}{1-\gamma}} \quad \text{as $k\to \infty$}
\end {align}
and when $\gamma = 1$ the densities decay exponentially with rate $(1-\alpha)/(2-\alpha)$.

\section{Proof of Theorem \ref{thm:as}}

We now turn to the proof of Theorem \ref{thm:as}. To simplify notation, we deal only with the case $\dmax = \infty$, the discussion being even simpler when $\dmax$ is finite. First we state a key lemma, that appears in slightly more general form in \cite{BackhauszMoriLemma}.
\begin{lemma}[Backhausz, Móri \cite{BackhauszMoriLemma}]
 Let $(\mathcal{F}_t)_{t=0}^{\infty}$ be a filtration. Let $(\xi_t)_{t=0}^{\infty}$ be a non-negative process adapted to $(\mathcal{F}_t)_{t=0}^{\infty}$, and let $(u_t)_{t=1}^{\infty},(v_t)_{t=1}^{\infty}$ be non--negative predictable processes such that $u_t<t$ for all $t\geq 1$ and $\lim_{t\to \infty} u_t = u>0$ exists almost surely. Let $w$ be a positive constant. Suppose that there exists $\delta >0$ such that $\mathbb{E}[(\xi_t-\xi_{t-1})^2 | \mathcal{F}_t]=O(t^{1-\delta})$. If 
\begin{align}
    \liminf_{t\to \infty}\frac{v_t}{w}\geq v
   \end{align} for some constant $v\geq 0$ and 
\begin{align}
 \mathbb{E}[\xi_t | \mathcal{F}_{t-1}]\geq \left(1-\frac{u_t}{t} \right)\xi_{t-1}+v_t,
\end{align}
then
\begin{align}
 \liminf_{t\to \infty}\frac{\xi_t}{tw}\geq \frac{v}{u+1} \quad \text{ a.s.}
\end{align}
\label{lem:backhauszmori}
\end{lemma}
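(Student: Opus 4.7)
The plan is to absorb the factor $(1-u_t/t)$ via a predictable rescaling, apply Doob's decomposition, and then control the martingale residual using Kronecker's lemma. Since $(u_t)$ is predictable and $u_t<t$, the product
\[
A_t := \prod_{s=1}^{t}\frac{s}{s-u_s}
\]
is positive, predictable, non-decreasing, and satisfies $(1-u_t/t)A_t = A_{t-1}$. Multiplying the hypothesized inequality by $A_t$ and setting $\eta_t := A_t\xi_t$ gives
\[
\mathbb{E}[\eta_t \mid \mathcal{F}_{t-1}] \geq \eta_{t-1} + A_t v_t,
\]
so $(\eta_t)$ is a non-negative submartingale. Writing its Doob decomposition $\eta_t = \eta_0 + Z_t + C_t$, with $Z$ a martingale and $C$ predictable and non-decreasing, one obtains $C_t \geq \sum_{s=1}^{t} A_s v_s$.

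Because $u_s\to u>0$ a.s., one has $\log A_t = -\sum_{s=1}^{t}\log(1-u_s/s) = \sum_{s=1}^{t} u_s/s + O(1) = u\log t + o(\log t)$ a.s.; hence $A_t = t^{u+o(1)}$ and, pathwise on the full-probability event $\{u_t\to u\}$, $A_{\lfloor \tau t\rfloor}/A_t \to \tau^u$ for each fixed $\tau \in (0,1]$. Combining this with the hypothesis $\liminf v_t/w \geq v$ via a Riemann-sum approximation of $\int_0^1 x^u\,dx = 1/(u+1)$ yields
\[
\liminf_{t\to\infty}\frac{1}{tA_t}\sum_{s=1}^{t} A_s v_s \geq \frac{wv}{u+1}\quad \text{a.s.}
\]
It therefore suffices to prove $Z_t/(tA_t)\to 0$ a.s.

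For this I would introduce the martingale $N_t := \sum_{s=1}^{t}(Z_s - Z_{s-1})/(sA_s)$. Since $A_s$ is $\mathcal{F}_{s-1}$-measurable and the conditional mean minimizes conditional $L^2$ distance,
\[
\mathbb{E}[(Z_s-Z_{s-1})^2 \mid \mathcal{F}_{s-1}] = A_s^2\,\mathrm{Var}(\xi_s \mid \mathcal{F}_{s-1}) \leq A_s^2\,\mathbb{E}[(\xi_s-\xi_{s-1})^2 \mid \mathcal{F}_{s-1}] = O(A_s^2\, s^{1-\delta}),
\]
so $\mathbb{E}[(N_s-N_{s-1})^2 \mid \mathcal{F}_{s-1}] = O(s^{-1-\delta})$. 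The quadratic variation of $N$ is a.s.\ summable, so $N_t$ converges a.s.\ to a finite limit. Since $(tA_t)$ is predictable, non-decreasing, and diverges to infinity, Kronecker's lemma gives $Z_t/(tA_t) \to 0$ a.s. Combining this with the drift asymptotic yields
\[
\liminf_{t\to\infty}\frac{\xi_t}{tw} = \liminf_{t\to\infty}\frac{\eta_t}{twA_t} \geq \frac{v}{u+1}\quad \text{a.s.}
\]

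The main technical obstacle is the Ces\`aro-type asymptotic for $\sum_{s=1}^t A_s v_s/(tA_t)$, because $u$ is only an a.s.\ limit and therefore random, so the pointwise convergence $A_{\lfloor \tau t\rfloor}/A_t \to \tau^u$ must be promoted to a Riemann-sum estimate pathwise; I would handle this by fixing $\epsilon>0$, splitting the sum at $s = \epsilon t$, bounding the small-$s$ tail crudely by the monotonicity of $A_s$ and using the pointwise asymptotic uniformly on $[\epsilon,1]$ for the bulk. Apart from this bookkeeping step, the rest of the argument is a standard combination of Doob's decomposition and Kronecker's lemma.
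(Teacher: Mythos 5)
The paper does not prove this lemma; it is imported verbatim (in a slightly specialized form) from Backhausz and M\'ori \cite{BackhauszMoriLemma} and used as a black box, so there is no in-paper proof to compare against. Your argument is nevertheless sound and is the standard route for lemmas of this stochastic-approximation type. The predictable rescaling $A_t=\prod_{s\le t}s/(s-u_s)$ does turn the drift inequality into a submartingale inequality for $\eta_t=A_t\xi_t$; the Doob compensator satisfies $C_t\ge\sum_{s\le t}A_sv_s$; the asymptotics $\log A_t=u\log t+o(\log t)$ and $A_{\lfloor\tau t\rfloor}/A_t\to\tau^u$ follow pathwise from $u_s\to u$ via $-\log(1-u_s/s)=u_s/s+O(s^{-2})$; and since $\mathrm{Var}(\xi_s\mid\mathcal{F}_{s-1})\le\mathbb{E}[(\xi_s-\xi_{s-1})^2\mid\mathcal{F}_{s-1}]$ (here you correctly use that $\xi_{s-1}$ is $\mathcal{F}_{s-1}$-measurable, and implicitly read the paper's typo $\mathcal{F}_t$ as $\mathcal{F}_{t-1}$), the martingale $N_t=\sum_{s\le t}(Z_s-Z_{s-1})/(sA_s)$ has a.s.\ summable conditional quadratic variation and hence converges, so Kronecker's lemma gives $Z_t/(tA_t)\to 0$. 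One remark on the step you flag as the main obstacle: for a lower bound you do not actually need the convergence $A_{\lfloor\tau t\rfloor}/A_t\to\tau^u$ to be uniform in $\tau$. Fix a finite partition $\epsilon=\tau_0<\cdots<\tau_m=1$, drop the range $s<\epsilon t$, use monotonicity of $A$ on each block $[\tau_i t,\tau_{i+1}t]$ together with $\liminf v_s\ge wv$, and then invoke the pointwise limit only at the finitely many points $\tau_i$; this already gives a lower Riemann sum for $wv\int_\epsilon^1 x^u\,dx$, and letting the mesh and $\epsilon$ tend to $0$ yields $wv/(u+1)$. With that simplification noted, the proof is complete.
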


We use Lemma \ref{lem:backhauszmori} to prove the following lemma. This essentially follows the approach taken in the papers \cite{BackhauszMori,Thornblad2015}. We note here that the $s$--term present below does not appear in Theorem \ref{thm:as} since we shall later prove that $\sum_{i=1}^{\infty}a_i=1$, but this is a priori not known.
\begin{lemma}\label{thm:liminf} 
 Suppose that $s:=\inf\{iw_{1,{i+1}} \ : \  i \geq 1\}>0$. 
Then  $\lim_{j\to \infty}a_k^{(j)}=:a_k$ exist for each $k$, $(a_k)_{ k \geq 1}$ is a positive bounded sequence satisfying
\begin{align}
 (w_2+w_1)a_1&=\sum_{i=1}^{\infty}iw_{1,i+1}a_i+s\left(1-\sum_{i=1}^{\infty}a_i\right),\label{eq:liminf1}  \\
 (w_2+w_k)a_k&=\sum_{i=k-1}^{\infty}iw_{k,i+2-k}a_i \qquad \qquad \qquad \qquad (k\geq 2).\label{eq:liminfk}
\end{align}
and $\liminf_{t\to \infty}\frac{n_{t,k}}{t} \geq a_k$ holds almost surely,
\end{lemma}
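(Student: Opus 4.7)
The strategy is a double induction: an inner induction on $j$ that bootstraps lower bounds on $\liminf_{t\to\infty} n_{t,k}/t$ through repeated applications of Lemma~\ref{lem:backhauszmori}, followed by passage to the limit $j\to\infty$. The starting point is the one-step drift,
\begin{align}
\mathbb{E}[n_{t,k} \mid \mathcal{F}_{t-1}] = \Bigl(1-\tfrac{w_k}{W_{t-1}}\Bigr) n_{t-1,k} + \tfrac{1}{W_{t-1}}\sum_{i\geq k-1} i w_{k,i+2-k} n_{t-1,i},
\end{align}
obtained from the splitting dynamics together with \eqref{eq:wt} and matching \eqref{eq:solution}. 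Since $|n_{t,k}-n_{t-1,k}|\leq 2$, the variance hypothesis of Lemma~\ref{lem:backhauszmori} holds trivially with $\delta=1$.

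The induction claim is that $\liminf_{t\to\infty} n_{t,k}/t \geq a_k^{(j)}$ almost surely for every $k\geq 1$ and every $j\geq 0$. The base case $j=0$ is immediate, and for the inductive step with $k\geq 2$ I would set $u_t:=w_k t/W_{t-1}$ and $v_t := \tfrac{1}{W_{t-1}}\sum_{i\geq k-1}i w_{k,i+2-k} n_{t-1,i}$, so $u_t\to w_k/w_2$. Fatou's lemma for the counting measure on $\mathbb{N}$ (valid as every summand is nonnegative), combined with the inductive hypothesis, yields
\begin{align}
\liminf_{t\to\infty} v_t \geq \tfrac{1}{w_2}\sum_{i\geq k-1}i w_{k,i+2-k}a_i^{(j)} \quad \text{a.s.,}
\end{align}
and Lemma~\ref{lem:backhauszmori} with $w=1$ then delivers $\liminf n_{t,k}/t \geq a_k^{(j+1)}$ by \eqref{eq:seqkj}. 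The case $k=1$ needs a twist: the identity $w_{1,2}=w_1$ (from \eqref{splittingweights}) causes the subtractive term in the naive drift to cancel, so applying Lemma~\ref{lem:backhauszmori} directly returns only the trivial bound $0$. Instead I would exploit $iw_{1,i+1}\geq s$ and $n_{t-1,1}=(t-1)-\sum_{i\geq 2}n_{t-1,i}$ to rewrite
\begin{align}
\mathbb{E}[n_{t,1} \mid \mathcal{F}_{t-1}] = \Bigl(1-\tfrac{s}{W_{t-1}}\Bigr) n_{t-1,1} + \tfrac{s(t-1)}{W_{t-1}} + \tfrac{1}{W_{t-1}}\sum_{i\geq 2}(iw_{1,i+1}-s) n_{t-1,i},
\end{align}
with every summand nonnegative, and apply Lemma~\ref{lem:backhauszmori} with $u=s/w_2$ to obtain $\liminf n_{t,1}/t \geq a_1^{(j+1)}$ by \eqref{eq:seq1j}.

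Independently, the sequences $(a_k^{(j)})_{j\geq 0}$ are nondecreasing in $j$, since all coefficients in the recursions \eqref{eq:seq1j}--\eqref{eq:seqkj} are nonnegative and $a_k^{(1)}\geq 0 = a_k^{(0)}$; hence $a_k:=\lim_j a_k^{(j)}$ exists in $[0,\infty]$, and the inductive lower bound forces $a_k\leq \liminf n_{t,k}/t \leq 1$. Taking a countable intersection of the full-measure sets over $j$ produces $\liminf n_{t,k}/t \geq a_k$ a.s., and monotone convergence applied to the recursion delivers \eqref{eq:liminfk} directly and \eqref{eq:liminf1} after rearranging using $w_{1,2}=w_1$. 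Positivity propagates from the seed $a_1\geq a_1^{(1)}=s/(w_2+s)>0$: the hypothesis $s>0$ ensures $w_{1,k+1}>0$ for every $k\geq 1$, and iterating the recursion then yields $a_k^{(j)}>0$ for sufficiently large $j$, hence $a_k>0$.

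I expect the main obstacle to be the special treatment of $k=1$. Without the $s$-trick, the drift equation for $n_{t,1}$ contains no genuinely subtractive term, so the naive application of Lemma~\ref{lem:backhauszmori} collapses to $0$ and nothing propagates through the induction. Producing the constant drift $s(t-1)/W_{t-1}\to s/w_2$ out of the uniform bound $iw_{1,i+1}\geq s$ is precisely what makes the hypothesis $s>0$ essential, and this is the origin of the seemingly extraneous term $s\bigl(1-\sum_i a_i\bigr)$ appearing in \eqref{eq:liminf1}.
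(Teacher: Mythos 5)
Your proposal matches the paper's own proof essentially step for step: the same one-step drift identities, the same $s$-trick for the $k=1$ case (rewriting $n_{t-1,1}=(t-1)-\sum_{i\geq 2}n_{t-1,i}$ to manufacture a genuine subtractive term and a constant drift $\sim s/w_2$), the same double induction combining Lemma~\ref{lem:backhauszmori} with monotonicity of $(a_k^{(j)})_j$, and the same passage to the limit via monotone convergence to obtain \eqref{eq:liminf1}--\eqref{eq:liminfk}. The only addition is your short positivity argument propagating $a_1>0$ forward via $w_{k,1}=w_{1,k}>0$, which the paper's proof leaves implicit; that argument is correct.
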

Let us first state the idea behind the proof of Lemma \ref{thm:liminf}. To ease notation we define $A_k=\liminf_{t \to \infty}\frac{n_{t,k}}{t}$ for each $k\geq 1$.
\begin{enumerate}
 \item For each $k\geq 1$, show by induction that $a_k^{(j)}\leq A_k$ for all $j\geq 0$. 
\item Prove that $(a_k^{(j)})_{j=1}^{\infty}$ is monotonically increasing (in $j$) for each $k$. Since each such sequence lies in the bounded set $[0,1]$, the limit $\lim_{j\to \infty}a_k^{(j)}=a_k$ exists. Then we have that $a_k\leq A_k$.
\end{enumerate}

\begin{proof}
 Recall the definition of the total weight $W_t$ in \eqref{eq:wt}. The following expressions for the expected number of vertices of degree $k$, conditional on the tree at the previous time step, easily follow from the growth rules. For $k=1$ we have that 
\begin{align}
\begin{split}
 \mathbb{E}[n_{t,1} \ | \ \mathcal{F}_{t-1}] 
 \ &=\  n_{t-1,1}+\frac{1}{W_{t-1}}\sum_{i=2}^{\infty}iw_{1,i+1}n_{t-1,i} \\
 \ &=\  n_{t-1,1}\left(1-\frac{s}{W_{t-1}} \right)+\frac{s}{W_{t-1}}n_{t-1,1}+\frac{1}{W_{t-1}}\sum_{i=2}^{\infty}iw_{1,i+1}n_{t-1,i} \\
&  = \ n_{t-1,1}\left(1-\frac{s}{W_{t-1}} \right)+\frac{s}{W_{t-1}}(t-1)+\frac{1}{W_{t-1}}\sum_{i=2}^{\infty}(iw_{1,i+1}-s)n_{t-1,i}.
\end{split}
\end{align}
In the last line we used the fact that $n_{t-1,1}=(t-1)-\sum_{i=2}^{\infty}n_{t-1,i}$.

For $k\geq 2$ we have that
\begin{align}
\mathbb{E}[n_{t,k} \ | \ \mathcal{F}_{t-1}] 
 \ =\  n_{t-1,k}\left(1-\frac{w_k}{W_{t-1}} \right) 
 \ + \ \frac{1}{W_{t-1}}\sum_{i=k-1}^{\infty}iw_{k,i-k+2}n_{t-1,i} \label{cond}
\end{align}

We shall use the above analysis along with Lemma \ref{lem:backhauszmori}. By induction we prove that $a_k^{(j)}\leq A_k$ for all $j\geq 0$. For $j=0$ we clearly have $a_k^{(0)}=0\leq A_k$. Suppose that $a_k^{(j)}\leq A_k$ for some $j$ and for all $k\geq 1$. We prove first that $a_1^{(j+1)}\leq A_1$. For this, define the following variables:
\begin{align}
\begin{cases}
 \xi_t=n_{t,1}, \\ 
w=1, \\
u_t=\frac{s}{W_{t-1}}t, \\ 
v_t= \frac{s}{W_{t-1}}(t-1)+\frac{1}{W_{t-1}}\sum_{i=2}^{\infty}(iw_{1,i+1}-s)n_{t-1,i}.
\end{cases}
\end{align}
We note that $(u_t)_{t=1}^{\infty}$ and $(v_t)_{t=1}^{\infty}$ are positive predictable sequences and that $\xi_t$ is non--negative and adapted. Furthermore, we have that $u_t=\frac{s}{W_{t-1}}t\to \frac{s}{w_2}=:u$. The condition $u_t<t$ in Lemma \ref{lem:backhauszmori} is satisfied for large enough $t$, which is enough. In fact, the initial starting tree is irrelevant, so if $u_{t_0}\geq t_0$, one can grow the tree and wait until $u_t<t$ occurs, at which point Lemma \ref{lem:backhauszmori} can be applied. Recall that $s=\inf\{iw_{1,i+1} \ : \ i\geq 1 \}$. Using Fatou's lemma and the induction hypothesis we find that
\begin{align}
\begin{split}
 \liminf_{t\to \infty}v_t
&\geq \frac{s}{w_2}+\sum_{i=2}^{\infty}(iw_{1,i+1}-s)\liminf_{t\to \infty}\frac{n_{t-1,i}}{W_{t-1}} \\
&\geq \frac{s}{w_2}+\sum_{i=2}^{\infty}(iw_{1,i+1}-s)\frac{a_i^{(j)}}{w_2} \\
&=\frac{s}{w_2}+\frac{1}{w_2}\sum_{i=2}^{\infty}(iw_{1,i+1}-s)a_i^{(j)} \\
&:=v.
\end{split}
\end{align}
Applying Lemma \ref{lem:backhauszmori} we find that
\begin{align}
\begin{split}
 A_1=\liminf_{t\to \infty}\frac{n_{t,1}}{t} 
&\geq \frac{v}{u+1} \\
&=\frac{\frac{s}{w_2}+\frac{1}{w_2}\sum_{i=2}^{\infty}(iw_{1,i+1}-s)a_i^{(j)}}{\frac{s}{w_2}+1} \\
&=\frac{1}{w_2+s}\left(s+\sum_{i=2}^{\infty}(iw_{1,i+1}-s)a_i^{(j)}\right) \\
&=a_1^{(j+1)}.
\end{split}
\end{align}

For $k\geq 2$ define the following variables: 
\begin{align}
\begin{cases}
 \xi_t=n_{t,k}, \\ 
w=1, \\
u_t=\frac{w_k}{W_{t-1}}t, \\
v_t= \frac{1}{W_{t-1}}\sum_{i=k-1}^{\infty}iw_{k,i-k+2}n_{t-1,i}.
\end{cases}
\end{align} 
We note that $(u_t)_{t=1}^{\infty}$ and $(v_t)_{t=1}^{\infty}$ are positive predictable sequences and that $\xi_t$ is non--negative and adapted. Furthermore, we have that $u_t=\frac{w_k}{W_{t-1}}t\to \frac{w_k}{w_2}=:u$. We now apply Fatou's lemma and use the induction hypothesis and find that
\begin{align}
\begin{split}
 \liminf_{t\to \infty}v_t
&\geq \sum_{i=k-1}^{\infty}iw_{k,i-k+2}\liminf_{t\to \infty}\frac{n_{t-1,i}}{W_{t-1}} \\
&\geq \sum_{i=k-1}^{\infty}iw_{k,i-k+2}\frac{a_i^{(j)}}{w_2} \\
&=\frac{1}{w_2}\sum_{i=k-1}^{\infty}iw_{k,i-k+2}a_i^{(j)} \\
&:=v.
\end{split}
\end{align}
Applying Lemma \ref{lem:backhauszmori} we find that 
\begin{align}
\begin{split}
 A_k=\liminf_{t\to \infty}\frac{n_{t,k}}{t} 
&\geq \frac{v}{u+1} \\
&=\frac{\frac{1}{w_2}\sum_{i=k-1}^{\infty}iw_{k,i-k+2}a_i^{(j)}}{\frac{w_k}{w_2}+1} \\
&=\frac{1}{w_2+w_k}\sum_{i=k-1}^{\infty}iw_{k,i-k+2}a_i^{(j)} \\
&=a_1^{(j+1)}.
\end{split}
\end{align}

We note finally that the technical condition $\mathbb{E}[(\xi_t-\xi_{t-1})^2 | \mathcal{F}_t]=O(t^{1-\delta})$ is satisfied for all $k\geq 1$. Indeed, the conditional expectation $\mathbb{E}[(\xi_t-\xi_{t-1})^2 | \mathcal{F}_t]$ is bounded above by $4$. This completes the Step 1, i.e. we have showed that $a_k^{(j)}\leq A_k$ for all $j\geq 0$ and all $k\geq 1$. 

We now prove that for each $k\geq 1$, the sequence $(a_k^{(j)})_{j=0}^{\infty}$ is increasing. We use an inductive argument. By construction we have that $a_1^{(0)}=0\leq \frac{s}{s+w_2}= a_1^{(1)}$. Suppose that the statement is true for some $j$. Recall that $s:=\inf\{iw_{1,{i+1}} \ : \ i\geq 1\}>0$, so in particular $iw_{1,i+1}-s\geq 0$ for all $i\geq 1$. Then
\begin{align}
\begin{split}
  a_1^{(j+1)}&=\frac{1}{w_2+s}\left(s+\sum_{i=2}^{\infty}(iw_{1,i+1}-s)a_i^{(j)} \right) \\
&\geq \frac{1}{w_2+s}\left(s+\sum_{i=2}^{\infty}(iw_{1,i+1}-s)a_i^{(j-1)} \right) \\
&\geq a_1^{(j)}.
\end{split}
\label{eq:increasing}
\end{align}
This proves that the sequence  $(a_k^{(j)})_{j=0}^{\infty}$ is increasing for $k=1$. The proof for $k\geq 2$ is similar and we omit this.

For each $k\geq 1$ we thus have that the sequence $(a_k^{(j)})_{j=0}^{\infty}$ is an increasing sequence. It is bounded above by $A_k\leq 1$, so each such sequence must be convergent and we have the existence of a limit 
\begin{align}
 \lim_{j\to \infty}a_k^{(j)}=a_k.
\end{align}

By taking limits in (\ref{eq:seq10}--\ref{eq:seqkj}) we find that
\begin{align}
  a_1&=\frac{1}{w_2+s}\left(s+\sum_{i=2}^{\infty}(iw_{1,i+1}-s)a_i \right) \label{eq:lim1j}
\end{align}
and for each $k\geq 2$
\begin{align}
 a_k&=\frac{1}{w_2+w_k}\sum_{i=k-1}^{\infty}iw_{k,i-k+2}a_{i}.\label{eq:limkj} 
\end{align}
Interchanging limits and summation is justified since all terms are positive. Using $w_1=w_{1,2}$, it is now easy to see that (\ref{eq:lim1j}) is equivalent to
\begin{align}
 (w_2+w_1)a_1=\sum_{i=1}^{\infty}iw_{1,i}a_i+s\left(1-\sum_{i=1}^{\infty}a_i \right).
\end{align}
This completes the proof.
\end{proof}

Next we show that the sequence $(a_k)_{k=1}^{\infty}$, constructed in Lemma \ref{thm:liminf}, defines a probability distribution. After this we prove Theorem \ref{thm:as}.  First we prove the following lemma, which is in the spirit of Lemma 2.3 in \cite{DDJS:2009} 
\begin{lemma}\label{lem:eigeneqs}
 The sequence $(a_k)_{k=1}^{\infty}$ satisfies
$\sum_{i= 1}^{\infty}a_i=1$ and $\sum_{i= 1}^{\infty}ia_i=2$.
\end{lemma}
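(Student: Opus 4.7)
The plan is to combine the system of equations from Lemma~\ref{thm:liminf} with the deterministic identities~\eqref{eq:sum} to pin down the two partial sums $S := \sum_{i\geq 1} a_i$ and $T := \sum_{i\geq 1} i a_i$. Finiteness of $S$ and $T$ comes essentially for free: dividing~\eqref{eq:sum} by $t$ and applying Fatou's lemma, together with the bound $a_i \leq \liminf_t n_{t,i}/t$ from Lemma~\ref{thm:liminf}, gives $S \leq 1$ and $T \leq 2$.

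The core of the argument is to derive two linear relations between $S$ and $T$ by summing the equations~\eqref{eq:liminf1}--\eqref{eq:liminfk}. Summing unweighted over $k \geq 1$ and interchanging summations on the right (justified by nonnegativity), the right-hand side becomes
\begin{align}
\sum_{i\geq 1} i a_i \sum_{k=1}^{i+1} w_{k,i+2-k} + s(1-S).
\end{align}
By the definition~\eqref{splittingweights} of the splitting weights the inner sum equals $2w_i/i$, so the double sum collapses to $2\sum_i w_i a_i + s(1-S)$. Using $w_k = ak+b$ and $w_2 = 2a+b$, the resulting identity simplifies to
\begin{align}
a(2S - T) = s(1-S). \label{eq:rel1sketch}
\end{align}
Repeating the argument after first multiplying the $k$-th equation by $k$ produces the inner sum $\sum_{k=1}^{i+1} k w_{k,i+2-k}$. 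Using the symmetry $w_{k,i+2-k} = w_{i+2-k,k}$ and the substitution $k \mapsto i+2-k$, one checks that $2\sum_k k w_{k,i+2-k} = (i+2)\sum_k w_{k,i+2-k} = 2(i+2)w_i/i$, so this inner sum equals $(i+2) w_i / i$. After the analogous algebraic simplification (the $\sum_k k^2 a_k$ terms cancel on both sides) one obtains
\begin{align}
b(T - 2S) = s(1-S). \label{eq:rel2sketch}
\end{align}

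Subtracting~\eqref{eq:rel1sketch} from~\eqref{eq:rel2sketch} eliminates the $s(1-S)$ term and yields $(a+b)(T - 2S) = 0$. Since $w_1 = \tfrac{1}{2}(w_{1,2}+w_{2,1}) = w_{1,2} = a+b$, and the hypothesis $s > 0$ forces $w_{1,2} > 0$, we have $a+b > 0$ and therefore $T = 2S$. Substituting back into either relation gives $s(1-S) = 0$, and since $s > 0$ we conclude $S = 1$ and hence $T = 2$. The only non-trivial ingredient is the pair of inner-sum identities for $\sum_k w_{k,i+2-k}$ and $\sum_k k w_{k,i+2-k}$; once these are in hand, the argument is purely algebraic and there is no real obstacle.
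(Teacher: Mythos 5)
Your overall architecture is the same as the paper's: sum the limiting equations unweighted and $k$-weighted, simplify using $\sum_{k=1}^{i+1}w_{k,i+2-k} = 2w_i/i$ and $\sum_{k=1}^{i+1}kw_{k,i+2-k}=(i+2)w_i/i$, then solve two linear relations in $S$ and $T$. Your first relation $a(2S-T)=s(1-S)$ is sound: in the unweighted sum the double series interchanges by Tonelli, and every series involved is finite because $\sum_k w_k a_k = aT+bS \leq 2a+b < \infty$.

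The gap is in your second relation $b(T-2S)=s(1-S)$. After multiplying the $k$-th equation by $k$ and summing, the term $a\sum_k k^2 a_k$ appears on both sides, and you declare that it "cancels on both sides." But when $a>0$, the only a priori information is $\sum k a_k \leq 2$; nothing rules out $\sum_k k^2 a_k = \infty$, in which case both sides of your identity are $+\infty$ and the cancellation is meaningless. Put differently, the term-by-term equality only licenses you to equate the two sides as elements of $[0,\infty]$; you cannot subtract a possibly infinite quantity from both sides. This is exactly where the paper is more careful: it truncates the $k$-sum at $N$, so that every quantity is finite, and after rearranging obtains
\begin{align}
w_2\sum_{k=1}^{N}ka_k -2 \sum_{k=1}^{N-1}w_ka_k -s\Bigl(1-\sum_{i=1}^{N}a_i\Bigr)
= - N w_N a_N + \sum_{i =N}^\infty i \sum_{k=1}^N k w_{k,i-k+2}\,a_i =: x_N.
\end{align}
The left side converges as $N\to\infty$, hence $x_N\to x$ exists, and the paper's version of your second relation is $b(T-2S)=s(1-S)+x$. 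Combining with the first relation gives $(a+b)(T-2S)=x$ and $S=1+\tfrac{ax}{s(a+b)}$, $T = 2 + \tfrac{(2a+s)x}{s(a+b)}$, so the whole conclusion reduces to showing $x=0$. This is a genuine extra step: the paper deduces $x\leq 0$ from $S\leq 1$, $T\leq 2$, and then rules out $x<0$ because it would force $N w_N a_N > -x/2$ eventually and hence $\sum_N w_N a_N = \infty$, contradicting $\sum_N w_N a_N = aT+bS<\infty$. Your proposal implicitly asserts $x=0$ by fiat (equivalently, assumes $\sum k^2 a_k<\infty$), so as written the argument only stands in the case $a=0$; for $a>0$ you need to supply the $x=0$ argument or some other control on the tail $N w_N a_N$.
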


\begin{proof}
Recall from Lemma \ref{thm:liminf} that for each $j$ and $k$, $a_k^{(j)} \leq \liminf_{n\to \infty}\frac{n_{t,k}}{t}$. Thus, for each $j$,
\begin {align}
 \sum_{k= 1}^{\infty} a_k^{(j)} \leq \sum_{k= 1}^{\infty} \liminf_{t\to \infty}\frac{n_{t,k}}{t} \leq \liminf_{t\to \infty} \frac{1}{t} \sum_{k=1}^{\infty}n_{t,k} = 1
\end {align}
by Fatou's lemma and (\ref{eq:sum}). Similarly, for each $j$, $\sum_{k= 1}^{\infty} k a_k^{(j)} \leq 2$. Letting $j\to \infty$ it follows from monotonicity of $a_k^{(j)}$ that the following series are convergent and satisfy
\begin {align} \label{eq:convsums}
 \sum_{k= 1}^{\infty} a_k \leq 1 \quad \text{and} \quad \sum_{k= 1}^{\infty} k a_k \leq 2.
\end {align}

Summing (\ref{eq:liminf1}) and (\ref{eq:liminfk}) over $k=1,2,\dots$ we find that
\begin{align}
\begin{split} \label{eq:A}
 w_2\sum_{k=1}^{\infty}a_k
&=-\sum_{k=1}^{\infty}w_ka_k+\sum_{k=1}^{\infty}\sum_{i=k-1}^{\infty}iw_{k,i-k+2}a_i+s\left(1-\sum_{i=1}^{\infty}a_i\right) \\
&=-\sum_{k=1}^{\infty}w_ka_k+2\sum_{i=1}^{\infty}w_ia_i+s\left(1-\sum_{i=1}^{\infty}a_i\right) \\
&=\sum_{i=1}^{\infty}w_ia_i+s\left(1-\sum_{i=1}^{\infty}a_i\right)
\end{split}
\end{align}
Similarly, multiplying the $k$:th equation by $k$ and summing over $k=1,2,\dots, N$ we find by swapping sums and using 
\begin {align}
 \sum_{k=1}^{i+1}kw_{k,i-k+2} = \frac{i+2}{2}\sum_{k=1}^{i+1}w_{k,i-k+2}
\end {align}
that
\begin{align}
 \begin{split}
  w_2\sum_{k=1}^{N}ka_k
&=-\sum_{k=1}^{N}kw_ka_k+\sum_{k=1}^{N}k\sum_{i=k-1}^{\infty}iw_{k,i-k+2}a_i+s\left(1-\sum_{i=1}^{N}a_i\right) \\
&=-\sum_{k=1}^{N}kw_ka_k+\sum_{i=1}^{N-1}i \left( \sum_{k=1}^{i+1}kw_{k,i-k+2}\right)a_i+\sum_{i=N}^{\infty}i\sum_{k=1}^{N}kw_{k,i-k+2}a_i+s\left(1-\sum_{i=1}^{N}a_i\right) \\
&=-\sum_{k=1}^{N}kw_ka_k+\sum_{i=1}^{N-1}i\frac{i+2}{2}\sum_{k=1}^{i+1}w_{k,i-k+2}a_i+\sum_{i=N}^{\infty}i\sum_{k=1}^{N}kw_{k,i-k+2}a_i+s\left(1-\sum_{i=1}^{N}a_i\right) \\
&=-\sum_{k=1}^{N}kw_ka_k+\sum_{i=1}^{N-1}(i+2)w_ia_i+\sum_{i=N}^{\infty}i\sum_{k=1}^{N}kw_{k,i-k+2}a_i+s\left(1-\sum_{i=1}^{N}a_i\right) 
 \end{split}
\end{align}
which yields, with some simple rewriting
\begin{align}
 \begin{split}
  w_2\sum_{k=1}^{N}ka_k -2 \sum_{k=1}^{N-1}w_ka_k -s\left(1-\sum_{i=1}^{N}a_i\right)
&= - N w_N a_N + \sum_{i =N}^\infty i \sum_{k=1}^N k w_{k,i-k+2}a_i.
 \end{split}
\end{align}

The limit, as $N\to\infty$, of the left hand side exists by \eqref{eq:convsums} and thus the limit of the right hand side exists, denote it by  
\begin{align}
 x := \lim_{N\to \infty} \left(- N w_N a_N + \sum_{i =N}^\infty i \sum_{k=1}^N k w_{k,i-k+2}a_i\right).
\end{align}
Then
\begin{align} \label{eq:B}
   w_2\sum_{k=1}^{\infty}a_k -2 \sum_{k=1}^{\infty}w_ka_k -s\left(1-\sum_{i=1}^{\infty}a_i\right) = x.
\end{align}

Finally, let $A=\sum_{i=1}^{\infty}a_i$ and $B=\sum_{i=1}^{\infty}ia_i$. Putting $w_k=ak+b$ in \eqref{eq:A} and \eqref{eq:B} we obtain the linear system of equations
\begin{align}
\begin{cases}
  (2a+b)A=aB+bA+s-sA, \\
 (2a+b)B=2(aB+bA)+s-sA+x
\end{cases}
\end{align}
having solutions
\begin {align}
 A &= 1+\frac{ax}{s(a+b)} \\
 B &= 2+\frac{(2a+s)x}{s(a+b)}.
\end {align}
Since $\dmax = \infty$, $a\geq 0$ and thus by \eqref{eq:convsums} it necessarily holds that $x\leq 0$.
If $x<0$ then there is an $M$ such that for all $N\geq M$, $N w_N a_N > -x/2$. Therefore
\begin {align}
 \sum_{N=1}^\infty w_N a_N > \sum_{N = M}^{\infty} \frac{-x}{2N} = \infty
\end {align}
which contradicts \eqref{eq:convsums}. Thus $x=0$ which gives $A=1$ and $B=2$, as desired.
\end{proof}

\begin{proof}[Proof of Theorem \ref{thm:as}]
Already knowing that $\liminf_{t\to \infty}\frac{n_{t,k}}{t}\geq a_k$, the idea is to prove that $\limsup_{t\to \infty}\frac{n_{t,k}}{t}\leq a_k$ for all $k\geq 1$. By Lemma \ref{lem:eigeneqs} we have that $\sum_{k=1}^{\infty}a_k=1$. By definition it holds that $\sum_{k=1}^{\infty}\frac{n_{t,k}}{t}=1$.  The following calculation is routine and only uses Fatou's lemma and well--known facts about the limit inferior and limit superior. For any $k\geq 1$ we have that
\begin{align}
\begin{split}
 \limsup_{t\to \infty}\frac{n_{t,k}}{t}
=\limsup_{t\to \infty}\left(1-\sum_{\substack{j=1 \\ j\neq k}}^{\infty}\frac{n_{t,j}}{t} \right) 
&\leq 1-\sum_{\substack{j=1 \\ j\neq k}}^{\infty}\liminf_{t\to \infty}\frac{n_{t,j}}{t} \quad (\text{Fatou's Lemma})\\
&\leq 1-\sum_{\substack{j=1 \\ j\neq k}}^{\infty}a_j \quad (\text{Lemma.~\ref{thm:liminf}}) \\ 
&=a_k.
\end{split}
\end{align}
Thus, by the above along with Lemma \ref{thm:liminf}
\begin{align}\liminf_{t\to \infty}\frac{n_{t,k}}{t}\geq a_k\geq \limsup_{t\to \infty}\frac{n_{t,k}}{t}
 \end{align}
almost surely for all $k\geq 1$. This implies that for all $k\geq 1$
\begin{align}
 \lim_{t\to \infty}\frac{n_{t,k}}{t}=a_k
 \end{align} almost surely. Finally, Equations \eqref{eq:ask} and \eqref{eq:sums} follow from Lemma \ref{thm:liminf} and Lemma \ref{lem:eigeneqs}.
\end{proof}

\section{An extension to two colours}
As mentioned in the introduction, the motivation of this study originally comes from \cite{RNA}, wherein a two--coloured version of the vertex splitting tree was considered. In this model, each vertex in the tree is coloured either white or black, the number of black and white vertices of degree $i$ at time $t$ denoted by $n_{t,i}^{\black}$ and $n_{t,i}^{\circ}$ respectively. The parameters of the model are the \emph{splitting weights} $(w_i^{\black})_{i=1}^{\infty}, (w_i^{\circ})_{i=1}^{\infty}$ and symmetric \emph{partitioning} weights $(w_{i,j}^{\circ})_{i,j\geq 0}$, satisfying $w_i^{\circ}=\frac{i}{2}\sum_{i=1}^{j+1}w^{\circ}_{j,i+2-j}$. Start at time $t=2$ with a single edge with both its endpoints black. At each time step, the tree evolves as follows.
\begin{enumerate}
 \item Select a vertex $v$ in the tree with probability proportional to $w_{\deg(v)}^{\circ}$ if $v$ is white, and $w_{\deg(v)}^{\black}$ if $v$ is black.
  \item If $v$ is black, make it white. If $v$ is white, partition its edges into two disjoint sets of adjacent edges $E'$ of size $k-1$ and $E''$ of size $\deg(v)-k+1$ with probability $\frac{w^{\circ}_{k,\deg(v)+2-k}}{w_{\deg(v)}^{\circ}}$. Remove the vertex $v$ and its incident edges. Insert two new \emph{black} vertices $v'$ and $v''$, such that $v'$ is connected to all vertices $u$ such that $uv$ is an edge in $E'$, and $v''$ is connected to all vertices $w$ such that $wv$ is an edge in $E''$. Add the edge $v'v''$.
\end{enumerate}
The paper \cite{RNA} considered only the case corresponding to splitting weights $w_k^{\circ}=k+1$ and $w_{k}^{\black} = k$  (no bound on vertex degrees) and uniform partitioning weights 
\begin{align}
 w^{\circ}_{i,k+2-i} =  w_k^{\circ} / \binom{k+1}{2} & \quad \text{ for } i=1,\dots, k+1.         
\end{align} 
In this case the model is equivalent to a model of random RNA folding, see \cite{RNA} where the correspondence is explained in detail. 
Under the assumption that the limit exists, the authors in \cite{RNA} found that
\begin{align}
 \lim_{t\to \infty}\frac{\E[n_{t,k}^{\circ}]}{t} &= \frac{2^k k}{e^2(k+2)!}, \\
 \lim_{t\to \infty}\frac{\E[n_{t,k}^{\black}]}{t} &= \frac{2^k}{e^2(k+1)!},
\end{align}
for all $k\geq 1$.  We consider a wider class of splitting weights and partitioning weights and are able to prove an analogue of Theorem \ref{thm:as} replacing $\frac{\E[n_{t,k}^{\black}]}{t}$ and $\frac{\E[n_{t,k}^{\circ}]}{t}$ with $\frac{n_{t,k}^{\black}}{t}$ and $\frac{n_{t,k}^{\circ}}{t}$ respectively and  proving almost sure convergence. Thus we confirm, strengthen and generalize the above results from \cite{RNA}.

Let $w_k^{\circ}=\left(a-\frac{3}{2}b\right)k+a$ and $w_k^{\black}=\left(a-\frac{3}{2}b\right)k+b$. This choice ensures that the total weight grows linearly, i.e.
\begin{align}\label{eq:totweight}
 \sum_{i=1}^{\infty}\left(w_i^{\circ}n_{t,i}^{\circ}+w_i^{\black}n_{t,i}^{\black}\right) = (a-b)t+b
\end{align} 
which can be showed by induction. Moreover, up to a multiplicative constant this is the unique choice of splitting weights resulting in linear growth for the total weight. Note also that $a-b=w_2^{\black}/2=w_2^{\circ}/3$. It is also possible to show that
\begin{align}\label{eq:2totweight}
 \sum_{i=1}^{\infty} \left(3n_{t,i}^{\circ}+2n_{t,i}^{\black} \right)=t+2.
\end{align} 
Equations \eqref{eq:totweight} and \eqref{eq:2totweight} correspond to \eqref{eq:sum}.

In the following theorem, the sequences $(e_k^{\circ})_{k=1}^{\infty}$ and $(e_k^{\black})_{k=1}^{\infty}$ are constructed like the sequence $(a_k)_{k=1}^{\infty}$ in Theorem \ref{thm:as}, but we leave the exact details to the reader.

\begin{theorem}\label{thm:as2}
 Suppose that $\inf \{ iw^{\circ}_{1,i+1} \ : \ 1\leq i < \dmax + 1\}>0$. Then there exist two non--negative sequences $(e_k^{\circ})_{k=1}^{\infty}$ and $(e_k^{\black})_{k=1}^{\infty}$ such that
\begin{align}
& \lim_{t\to \infty} \frac{n_{t,k}^{\circ}}{t}\stackrel{a.s.}{=} e_k^{\circ} \\
&\lim_{t\to \infty} \frac{n_{t,k}^{\black}}{t}\stackrel{a.s.}{=}  e_k^{\black} 
\end{align}
satisfying
 \begin{align}
&(w_k^{\black}+w_2^{\black}/2)e_k^{\black} = \sum_{i=k-1}^{\infty} iw_{k,i-k+2} e_k^{\circ}, \\
&(w_k^{\circ}+w_2^{\circ}/3)e_k^{\circ}=w_k^{\black}e_k^{\black},
\end{align}
for all $1\leq k < \dmax +1$. Moreover $\sum_{i=1}^{\infty}(3e_i^{\circ}+2e_i^{\black})=1$ and $\sum_{i=1}^{\infty}(w_i^{\circ}e_{i}^{\circ}+w_i^{\black}e_i^{\black})=w_2^{\black}/2$.
\end{theorem}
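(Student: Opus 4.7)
The plan is to mirror the three--stage argument of Theorem~\ref{thm:as}: first establish liminf bounds via a Backhausz--Móri iteration, then prove the correct normalisation of the two limiting sequences, and finally upgrade to almost sure convergence. Writing $W_{t-1}=(a-b)(t-1)+b$, the growth rules yield the conditional expectations
\begin{align}
\E[n_{t,k}^{\black}\mid \mathcal{F}_{t-1}] &= n_{t-1,k}^{\black}\Bigl(1-\tfrac{w_k^{\black}}{W_{t-1}}\Bigr) + \tfrac{1}{W_{t-1}}\sum_{i=k-1}^{\infty} iw_{k,i-k+2}^{\circ}\, n_{t-1,i}^{\circ}, \\
\E[n_{t,k}^{\circ}\mid \mathcal{F}_{t-1}] &= n_{t-1,k}^{\circ}\Bigl(1-\tfrac{w_k^{\circ}}{W_{t-1}}\Bigr) + \tfrac{w_k^{\black}\,n_{t-1,k}^{\black}}{W_{t-1}},
\end{align}
together with the special form for $k=1$ (no vertex of degree $0$ appears, and splitting a white degree-$1$ vertex preserves one degree-$1$ vertex, now coloured black), which should be handled using the same $s:=\inf iw_{1,i+1}^{\circ}$ subtraction trick that is used to prove Lemma~\ref{thm:liminf}.

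Next, define the coupled iterations $(e_k^{\black,(j)})_{j\ge 0}$ and $(e_k^{\circ,(j)})_{j\ge 0}$ starting from zero by
\begin{align}
e_k^{\black,(j+1)} &= \frac{1}{w_k^{\black}+w_2^{\black}/2}\sum_{i=k-1}^{\infty} iw_{k,i-k+2}^{\circ}\, e_i^{\circ,(j)} \qquad (k\geq 2),\\
e_k^{\circ,(j+1)} &= \frac{w_k^{\black}}{w_k^{\circ}+w_2^{\circ}/3}\, e_k^{\black,(j)},
\end{align}
with a $k=1$ modification incorporating the $s$-term exactly as in \eqref{eq:seq10}--\eqref{eq:seq1j}. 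Proceeding by induction on $j$ and applying Lemma~\ref{lem:backhauszmori} (with $u_t=w_k^{\black}t/W_{t-1}$ or $w_k^{\circ}t/W_{t-1}$, which converge to positive limits $w_k^{\black}/(a-b)=2w_k^{\black}/w_2^{\black}$ and $3w_k^{\circ}/w_2^{\circ}$ respectively), together with Fatou's lemma on the incoming sum, one obtains $\liminf_{t\to\infty}n_{t,k}^{\black}/t\geq e_k^{\black,(j)}$ and the analogous inequality for the white case. Monotonicity of the two iterated sequences in $j$ follows by induction exactly as in \eqref{eq:increasing}, and since both are bounded by the trivial bound $\leq 1$, they converge to limits $e_k^{\black},e_k^{\circ}$ satisfying the fixed-point equations stated in the theorem.

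The main obstacle is the analog of Lemma~\ref{lem:eigeneqs}, namely showing that the limits are large enough: $\sum_i(3e_i^{\circ}+2e_i^{\black})=1$ and $\sum_i(w_i^{\circ}e_i^{\circ}+w_i^{\black}e_i^{\black})=w_2^{\black}/2$. Fatou, combined with \eqref{eq:totweight}--\eqref{eq:2totweight}, immediately yields the two inequalities $\sum_i(3e_i^{\circ}+2e_i^{\black})\leq 1$ and $\sum_i(w_i^{\circ}e_i^{\circ}+w_i^{\black}e_i^{\black})\leq w_2^{\black}/2=a-b$, and one gets convergence of both series. To get equality, I would sum the fixed-point equation for $e_k^{\black}$ over $k\geq 1$ (first combined with that for $e_k^{\circ}$ to eliminate the incoming sum using the same swap $\sum_k \sum_{i\geq k-1}iw_{k,i-k+2}^{\circ}=\sum_i 2iw_i^{\circ}$ that appears in \eqref{eq:A}), and then sum the same equations weighted by $k$ and $k$ after using $\sum_{k=1}^{i+1}kw_{k,i-k+2}^{\circ}=\tfrac{i+2}{2}\sum_k w_{k,i-k+2}^{\circ}$. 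Because $w_k^{\black}$ and $w_k^{\circ}$ are both linear in $k$, the four resulting sums collapse to a $4\times 4$ linear system in $A^{\black}:=\sum e_i^{\black}$, $A^{\circ}:=\sum e_i^{\circ}$, $B^{\black}:=\sum ie_i^{\black}$, $B^{\circ}:=\sum ie_i^{\circ}$, with a tail remainder $x$ coming from truncating the double sum at $N$. As in the proof of Lemma~\ref{lem:eigeneqs}, one shows $x\leq 0$ and then obtains a contradiction with the finiteness of $\sum w_i^{\black}e_i^{\black}$ unless $x=0$, yielding the desired equalities.

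Finally, the upgrade from $\liminf$ to full almost sure convergence proceeds exactly as at the end of the proof of Theorem~\ref{thm:as}: the identity $\sum_k(3n_{t,k}^{\circ}+2n_{t,k}^{\black})=t+2$ together with Fatou applied to all but one term gives $\limsup n_{t,k}^{\black}/t\leq e_k^{\black}$ and $\limsup n_{t,k}^{\circ}/t\leq e_k^{\circ}$, completing the argument. The two technical hurdles I anticipate are (i) correctly handling the $k=1$ case of the Backhausz--Móri iteration, since one must ensure a positive decrement rate even in the coupled system, and (ii) the tail-control argument for the sum identity, which is where the linear form of the splitting weights is genuinely needed.
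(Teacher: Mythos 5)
The paper itself omits the proof of Theorem~\ref{thm:as2}, merely stating that it is "similar to that of Theorem~\ref{thm:as}". Your proposal is a faithful and essentially correct unpacking of what that similarity entails: the conditional expectations for $n_{t,k}^{\black}$ and $n_{t,k}^{\circ}$ are right, the coupled iteration $e_k^{\black,(j)}, e_k^{\circ,(j)}$ with the correct normalising constants $w_2^{\black}/2 = w_2^{\circ}/3 = a-b$ is right, the observation that after combining the two summed fixed-point equations (and their $k$-weighted versions) the $\sum k w_k e_k$--type terms cancel to leave a linear system in the four first moments is right, and the final Fatou upgrade from $\liminf$ to limit via the identity $\sum_k(3n_{t,k}^{\circ}+2n_{t,k}^{\black}) = t+2$ is exactly how the one-colour proof closes.

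One place where "exactly as in \eqref{eq:seq10}--\eqref{eq:seq1j}" is slightly optimistic, and which you do flag: the $s$--trick in the one-colour proof relies on the identity $n_{t,1}=t-\sum_{i\ge 2}n_{t,i}$, which rewrites the self-referential gain term for $n_{t,1}$ purely in terms of the higher degrees. In the two-colour model the corresponding conservation law \eqref{eq:2totweight} mixes the colours, $3n_{t,1}^{\circ}+2n_{t,1}^{\black}=t+2-\sum_{i\ge 2}(3n_{t,i}^{\circ}+2n_{t,i}^{\black})$, and neither $n_{t,1}^{\circ}$ nor $n_{t,1}^{\black}$ can be isolated without also controlling the other. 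Your phrasing that one must "ensure a positive decrement rate" is not quite the point (the decrement rates $w_1^{\black},w_1^{\circ}$ are already positive here, unlike for the one-colour $k=1$ case); the real role of the $s$-term is to inject a positive source so that the iteration started from zero does not stay at zero. Since the iteration as you wrote it gives $e_k^{\black,(j)}=e_k^{\circ,(j)}=0$ for all $j$ unless some term carries an explicit constant, spelling out how the mixed identity is exploited at $k=1$ is the one nontrivial adaptation. But this is a refinement of an issue you already name, and the overall architecture matches the paper's intended argument.
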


Note that the quantities $\frac{n_{t,k}^{\circ}}{t}$ and $\frac{n_{t,k}^{\black}}{t}$ are not degree densities any more, since the tree does not grow whenever a black vertex is selected. The actual asymptotic degree densities are given by 
\begin{align}
\begin{split}
 \rho_i^{\black}
=\lim_{t\to \infty} \frac{n_{t,i}^{\black}}{\sum_{j=1}^{\infty}(n_{t,j}^{\black}+n_{t,j}^{\circ})} 
&=\lim_{t\to \infty}  \frac{n_{t,i}^{\black}}{t}\cdot \frac{t}{\sum_{j=1}^{\infty}(n_{t,j}^{\black}+n_{t,j}^{\circ})}\\
&=\frac{e_i^{\black}}{\sum_{j=1}^{\infty}(e_{j}^{\black}+e_{j}^{\circ})}
\end{split}
\end{align}
and
\begin{align}
  \rho_i^{\circ} = \frac{e_i^{\circ}}{\sum_{j=1}^{\infty}(e_{j}^{\black}+e_{j}^{\circ})}.
\end{align}

The proof of Theorem \ref{thm:as2} is similar to that of Theorem \ref{thm:as}, so we omit this. Instead we mention another result that relates the densities of any two--coloured process to a one--coloured process. Note however that this depends crucially on knowing that the solutions to the equations in Theorems \ref{thm:as} and \ref{thm:as2} are \emph{unique}, something we do not know in general. If this is known, the rest of the proof is straightforward and is omitted -- it amounts to showing that the appropriate conditions and equations in Theorem \ref{thm:as} and Theorem \ref{thm:as2} are satisfied.
\begin{proposition}\label{prop:1rel2}
 Let $w_i^{\circ}$ and $w_i^{\black}$ be the splitting weights for the 2--colour model. Let $w_{j,i+2-j}^{\circ}$ be the partitioning weights. Define a $1$--colour process with splitting weights $w_i=w_i^{\black}$ and partitioning weights $w_{j,i+2-j}=\frac{w_i^{\black}}{w_i^{\circ}}w_{j,i+2-j}^{\circ}$. Let $\rho_i^{\circ}$ and $\rho_i^{\black}$ be the degree densities of the 2--colour model, and let $a_i$ be the degree densities of the 1--colour model. If these are unique as solutions to the systems in Theorem \ref{thm:as} and Theorem \ref{thm:as2} respectively, then 
\begin{align}
\rho_i^{\circ}+\rho_i^{\black}=a_i
\end{align}
for all $1\leq i < \dmax+1$.
\end{proposition}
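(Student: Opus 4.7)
The plan is to verify that the sequence $b_k := \rho_k^{\circ} + \rho_k^{\black}$ satisfies all the hypotheses and equations of Theorem \ref{thm:as} for the 1--colour model with weights $w_i = w_i^{\black}$ and $w_{j,i+2-j} = (w_i^{\black}/w_i^{\circ})\, w_{j,i+2-j}^{\circ}$, and then invoke the uniqueness hypothesis to conclude $b_k = a_k$. First I would check that this 1--colour model is admissible in the sense of the paper. The partitioning weights are non--negative and symmetric in $(j,i+2-j)$, since the prefactor $w_i^{\black}/w_i^{\circ}$ depends only on the degree sum of the pair. The compatibility $\tfrac{i}{2}\sum_{j=1}^{i+1} w_{j,i+2-j} = (w_i^{\black}/w_i^{\circ})\cdot w_i^{\circ} = w_i^{\black} = w_i$ holds by construction, the splitting weights are linear, and the hypothesis $\inf\{i w_{1,i+1} : 1\le i < \dmax+1\} > 0$ of Theorem \ref{thm:as} transfers from the corresponding 2--colour hypothesis because $i w_{1,i+1} = (w_i^{\black}/w_i^{\circ})\cdot i w_{1,i+1}^{\circ}$ and the ratio $w_i^{\black}/w_i^{\circ}$ tends to $1$ as $i\to\infty$ and is positive for each finite $i$, hence uniformly bounded below.

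The central algebraic input is the identity $w_k^{\circ} = w_k^{\black} + c$, with $c := w_2^{\black}/2 = w_2^{\circ}/3 = a-b$, which is immediate from $w_k^{\circ} = (a-3b/2)k + a$ and $w_k^{\black} = (a-3b/2)k + b$. Combined with the second equation of Theorem \ref{thm:as2}, $(w_k^{\circ}+c)\, e_k^{\circ} = w_k^{\black}\, e_k^{\black}$, this yields the key relation
\begin{align}
 e_k^{\circ} + e_k^{\black} = \frac{2 w_k^{\circ}}{w_k^{\black}}\, e_k^{\circ}.
\end{align}
Setting $S := \sum_j (e_j^{\circ} + e_j^{\black})$, so that $b_k = (e_k^{\circ} + e_k^{\black})/S$, the target 1--colour equation $(w_2+w_k)b_k = \sum_{i\ge k-1} i\,w_{k,i+2-k}\, b_i$ multiplied by $S$ becomes
\begin{align}
 (w_2^{\black} + w_k^{\black})(e_k^{\circ} + e_k^{\black}) = \sum_{i\ge k-1} i\, \frac{w_i^{\black}}{w_i^{\circ}}\, w_{k,i+2-k}^{\circ}\, (e_i^{\circ} + e_i^{\black}).
\end{align}
Using $w_2^{\black}+w_k^{\black} = w_k^{\circ}+c$ together with the key relation, the left--hand side collapses to $2w_k^{\circ} e_k^{\black}$; on the right--hand side, the factor $(w_i^{\black}/w_i^{\circ})\cdot(2w_i^{\circ}/w_i^{\black}) = 2$ telescopes and leaves $2\sum_{i\ge k-1} i\, w_{k,i+2-k}^{\circ}\, e_i^{\circ}$, which equals $2w_k^{\circ} e_k^{\black}$ by the first equation of Theorem \ref{thm:as2}.

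It then remains to verify the two normalizations. The first, $\sum_k b_k = S/S = 1$, is immediate. For $\sum_k k\, b_k = 2$, I would invoke the handshake identity: at any time $t$ the tree has $V_t = \sum_i(n_{t,i}^{\circ}+n_{t,i}^{\black})$ vertices and $V_t-1$ edges, so $\sum_i i(n_{t,i}^{\circ}+n_{t,i}^{\black}) = 2(V_t-1)$; dividing by $V_t$ and passing to the almost sure limit using Theorem \ref{thm:as2} gives $\sum_k k\, b_k = 2$. With all hypotheses of Theorem \ref{thm:as} satisfied by $(b_k)$, the assumed uniqueness of the positive solution forces $b_k = a_k$, which is the claim. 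The main (though still routine) obstacle is the algebraic collapse of both sides to $2w_k^{\circ} e_k^{\black}$, which hinges crucially on the linear form of the 2--colour splitting weights making $w_k^{\circ} - w_k^{\black}$ independent of $k$; everything else is bookkeeping.
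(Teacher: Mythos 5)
Your proof is correct and carries out exactly what the paper says the omitted proof ``amounts to'': verifying that $b_k := \rho_k^{\circ}+\rho_k^{\black}$ satisfies the hypotheses and equations \eqref{eq:ask}--\eqref{eq:sums} of Theorem~\ref{thm:as} for the induced 1--colour model, and then invoking the assumed uniqueness. The admissibility checks (symmetry and compatibility of the induced partitioning weights, transfer of the positivity hypothesis via the bounded ratio $w_i^{\black}/w_i^{\circ}$), the key relation $e_k^{\circ}+e_k^{\black}=\tfrac{2w_k^{\circ}}{w_k^{\black}}e_k^{\circ}$, and the collapse of both sides of the 1--colour balance equation to $2w_k^{\circ}e_k^{\black}$ --- all resting on $w_k^{\circ}-w_k^{\black}=w_2^{\black}/2$ being independent of $k$ --- are exactly right. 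The one step that deserves a little more care is the normalization $\sum_k k\,b_k=2$: ``passing to the almost sure limit'' in $\sum_i i(n_{t,i}^{\circ}+n_{t,i}^{\black})/V_t = 2-2/V_t$ interchanges a limit with an infinite sum (twice, once also to get $V_t/t\to S$), which is not automatic and must be justified by a Fatou-type argument as in the proof of Lemma~\ref{lem:eigeneqs}. Alternatively, at least when the splitting weights are non-constant, the same identity follows purely algebraically from the two constraints $\sum_i(3e_i^{\circ}+2e_i^{\black})=1$ and $\sum_i(w_i^{\circ}e_i^{\circ}+w_i^{\black}e_i^{\black})=w_2^{\black}/2$ already established in Theorem~\ref{thm:as2}, together with the summed form of the second balance equation, so no finite-$t$ handshake identity is needed.
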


Let us illustrate Proposition \ref{prop:1rel2} by considering uniform partitioning weights in the the one--coloured and two--coloured cases, respectively. Indeed, this was the case considered in \cite{RNA} and mentioned at the beginning of the section. In any case, it can be shown that
\begin{align}
 \sum_{k=1}^{\infty}\left(\frac{2^k k}{e^2(k+2)!}+ \frac{2^k}{e^2(k+1)!} \right) = \frac{e^2-1}{2e^2},
\end{align}
so the asymptotic vertex densities are given by
\begin{align}
 \rho_k^{\circ} & = \frac{2e^2}{e^2-1}\frac{2^{k}k}{e^2(k+2)!} = \frac{2^{k+1}k}{(e^2-1)(k+2)!} \\
 \rho_k^{\black} & = \frac{2e^2}{e^2-1}\frac{2^k}{e^2(k+1)!} = \frac{2^{k+1}}{(e^2-1)(k+1)!}.
\end{align}
Now, if we follow the notation in Proposition \ref{prop:1rel2} we obtain a one--coloured process, which is precisely the uniform splitting model considered in Section \ref{sec:uniform}, with weights $w_k=k$ and partitioning weights 
\begin{align}
 w_{i,k+2-i}=\frac{w_k^{\black}}{w_k^{\circ}}w^{\circ}_{i,k+2-i}=\frac{k}{k+1}\cdot \frac{k+1}{\binom{k+1}{2}}=\frac{k}{\binom{k+1}{2}}=\frac{w_k}{\binom{k+1}{2}}.
\end{align}
Recall \eqref{eq:unif2}, i.e. that the limiting vertex densities in this case were
\begin{align}
 a_k = \frac{2^{k+2}(k+1)}{(e^2-1)(k+2)!}.
\end{align}
In this particular case one verifies that 
\begin{align}
 a_k= \frac{2^{k+2}(k+1)}{(e^2-1)(k+2)!}= \frac{2^{k+1}k}{(e^2-1)(k+2)!} + \frac{2^{k+1}}{(e^2-1)(k+1)!}=  \rho_k^{\circ}+  \rho_k^{\black},
\end{align}
as predicted by Proposition \ref{prop:1rel2}.

\end{document}